\date{}
\title{\bf{\textsc{Ergodic aspects of some \\Ornstein-Uhlenbeck type processes\\ related to L\'evy processes}}}
\author{Jean Bertoin\thanks{ Institut f\"ur Mathematik, 
Universit\"at Z\"urich, 
Winterthurerstrasse 190, 
CH-8057 Z\"urich, Switzerland. \hfill \eject
Email: jean.bertoin@math.uzh.ch}  }
\begin{document}

\maketitle

\newtheorem{theo}{Theorem}[section]
\newtheorem{lemma}{Lemma}[section]
\newtheorem{prop}{Proposition}[section]
\newtheorem{cor}{Corollary}[section]
\newtheorem{defi}{Definition}[section]
\newtheorem{rem}{Remark}
\numberwithin{equation}{section}
\newcommand{\e}{{\mathrm e}}
\newcommand{\rep}{{\mathrm{rep}}}
\newcommand{\R}{{\mathbb{R}}}
\newcommand{\C}{{\mathbb{C}}}
\newcommand{\T}{\mathbf{T}}
\newcommand{\E}{{\mathbb{E}}}
\renewcommand{\L}{\mathbf{L}}
\newcommand{\TT}{\mathbb{T}}
\newcommand{\dd}{\mathrm{d}}
\newcommand{\p}{\mathcal{P}}
\newcommand{\N}{\mathbb{N}}
\newcommand{\D}{\mathcal{D}}
\newcommand{\W}{\mathcal{W}}
\newcommand{\pr}{\mathbb{P}}
\newcommand{\Q}{\mathbb{Q}}
\newcommand{\z}{\mathbf{\zeta}}
\newcommand{\m}{\mathbf{\nu}}
\newcommand{\M}{\mathcal{M}}
\newcommand{\Z}{\mathbb{Z}}
\newcommand{\dia}{\diamond}
\newcommand{\bul}{\bullet}
\renewcommand{\geq}{\geqslant}
\renewcommand{\leq}{\leqslant}
\newcommand{\veps}{\varepsilon}

\begin{abstract}
This work concerns the Ornstein-Uhlenbeck type process  associated to
 a positive self-similar Markov process $(X(t))_{t\geq 0}$ which drifts to $\infty$, namely $U(t)\coloneqq \e^{-t}X(\e^{t}-1)$.
 We point out that $U$ is always a (topologically) recurrent Markov process and identify its invariant measure in terms of the law of the exponential functional $\hat I\coloneqq \int_0^{\infty}\exp(\hat\xi_s)\dd s$, where $\hat\xi$ is the dual of the real-valued L\'evy process $\xi$ related to $X$ by the Lamperti transformation.  This invariant measure is infinite (i.e. $U$ is null-recurrent) if and only if $\xi_1\not \in L^1(\pr)$. In that case, we determine the family of L\'evy processes $\xi$ for which  $U$ fulfills the conclusions of the Darling-Kac theorem. Our approach relies crucially on a remarkable connection due to Patie \cite{Patie} with
 another generalized Ornstein-Uhlenbeck process that can be associated to the L\'evy process $\xi$, and properties of time-substitutions based on additive functionals.   
\end{abstract}

\medskip

\noindent \emph{\textbf{Keywords:}  Ornstein-Uhlenbeck type process, Stationarity, Self-similar Markov process, L\'evy process, Exponential functional, Darling-Kac theorem.}

\medskip

\noindent \emph{\textbf{AMS subject classifications:}}  60G10, 60G18, 60G51, 37A10
\section{Introduction}

 Let $(\xi_t)_{t\geq 0}$ be a real-valued L\'evy process which drifts to $\infty$, that is  $\lim_{t\to \infty} \xi_t=\infty$ a.s. The so-called exponential functional 
 $$I(t)\coloneqq  \int_0^t\exp(\xi_s)\dd s$$
 defines a random bijection $I: \R_+\to \R_+$, and we denote  its inverse by $\tau$. A well-known transformation due to Lamperti \cite{La2},
 $$X(t)\coloneqq \exp(\xi_{\tau(t)}), $$
 yields a  Markov process $(X(t))_{t\geq 0}$ on $(0,\infty)$ that enjoys the scaling property (with index $1$), in the sense that for every $x>0$, $(xX(t/x))_{t\geq 0}$ is a version of $X$ started from $x$. Conversely, any Markov process $X$ on $(0,\infty)$ that fulfills the scaling property (with index $1$) and drifts to $\infty$  can be constructed in this way. We refer to the survey by Pardo and Rivero \cite{ParRiv} and references therein for a detailed presentation of the topic.

 The question of  the existence of  a {\em truly} self-similar  version $(\tilde X_t)_{t\geq 0}$,  that is, 
$\tilde X$ is a Markov process with the same transition probabilities as $X$  and  further there is the identity in distribution 
 $$ (c\tilde X(t/c))_{t\geq 0} \ {\overset{(d)}{=}} \ (\tilde X(t))_{t\geq 0} \qquad \text{ for every } c>0,$$ 
 is  equivalent to the question of whether 
$0+$ is an entrance boundary for the Markov process $X$. This
 was raised by Lamperti, and settled in the present setting\footnote{This question makes also sense when $\xi$ oscillates, that is  $\limsup_{t\to \infty} \xi_t=\infty$ and $\liminf_{t\to \infty} \xi_t=-\infty$ a.s. It was proved in \cite{CabCha} and \cite{CKPR} that the answer is positive if and only if the so-called ascending ladder height of $\xi$ has a finite expectation.} in  \cite{BY1}: the answer is positive if and only if $\xi_1\in L^1(\pr)$ (recall that then $\E(\xi_1)>0$, since the test of Chung and Fuchs ensures that in dimension $1$, centered L\'evy processes are recurrent and therefore oscillate), and further the stationary law can then be expressed in terms of the exponential functional $\hat I$ of the dual L\'evy process.

 On the other hand, there is another well-known transformation {\it \`a la} Ornstein-Uhlenbeck, also due to Lamperti \cite{La1}, that yields a bijection between self-similar processes and stationary processes. In the present setting, assuming again that $\xi_1\in L^1(\pr)$ and writing $\tilde X$ for the self-similar version of $X$, 
 $$\tilde U(t)\coloneqq \e^{-t}\tilde X(\e^t), \qquad t\in \R$$
 is a stationary process on $(0,\infty)$. Furthermore, the scaling property ensures that 
  \begin{equation}\label{e:defU}
  U(t)\coloneqq \e^{-t}X(\e^t-1), \qquad t\geq 0
  \end{equation} 
  is Markovian,  and the Markov processes $U$ and $\tilde U$ have the same semigroup.

 The initial motivation for this work is to analyze the  situation when $\xi_1\not\in L^1(\pr)$.  We shall show that the Ornstein-Uhlenbeck type process $U$ 
  still possesses a stationary version $\tilde U$, but now under an infinite measure
 $\Q$ which is absolutely continuous with respect to $\pr$. 
 More precisely,  $U$ is a (null) recurrent Markov process and its invariant measure $\nu$ can be expressed similarly as in the positive recurrent case in terms of the dual exponential functional $\hat I$. 
 When $\E(\xi_1)=\infty$, the claim that $U$ is recurrent might look surprising at first sight, since the L\'evy process may grow faster than any given polynomial (think for instance of stable subordinators). One could expect that the same might hold for $X(t)=\exp(\xi_{\tau(t)})$, which would then impede  the recurrence of $U$. However the time-substitution by $\tau$ has a slowing down effect when $X$ gets larger, and actually $X$  only grows linearly fast.  
 
 Our main result is related to the celebrated Darling-Kac theorem, which can be thought of as a version of Birkhoff's ergodic theorem in infinite invariant measure; see e.g. Theorem 3.6.4 in \cite{Aaron} and Theorem 8.11.3 in \cite{BGT}. We show that if $a:(0,\infty)\to (0,\infty)$ is regularly varying at $\infty$ with index $\alpha\in(0,1)$,
 then for every nonnegative $f\in L^1(\nu)$,
 $a(t)^{-1} \int_0^t f(U(s)) \dd s$ converges in distribution as $t\to \infty$ towards a Mittag-Leffler distribution with parameter $\alpha$ if and only if $b(t)^{-1}\xi_t$ converges in distribution as $t\to \infty$ to a positive stable random variable with exponent $\alpha$, where $b$ denotes an asymptotic inverse of $a$.

At the heart of our approach lies the fact that one can associate to the L\'evy process $\xi$ another generalized 
 Ornstein-Uhlenbeck process, namely
 $$V(t)\coloneqq \exp(-\xi_t) \left(I(t)+V(0)\right), \qquad t\geq 0.$$ Lindner and Maller \cite{LinMa} have shown that, since $\xi$ drifts to $\infty$,  $V$
 always possesses a stationary version $\tilde V$, no matter whether $\xi_1$ is integrable or not. Patie \cite{Patie} pointed at a remarkable connection between $U$ and $V$  via a simple time
 substitution, and this provides a powerful tool for the analysis of  $U$.
 
 The rest of this paper is organized as follows. We start in Section 2 by providing background on the generalized Ornstein-Uhlenbeck process $V$. Then, in Section 3, we construct a stationary version $\tilde U$ of $U$ under a possibly infinite equivalent measure, and point at the topological recurrence of $U$. Finally, in Section 4, we address the Darling-Kac theorem for the occupation measure of $\tilde U$. 
 
 \section{Background on another  generalized Ornstein-Uhlenbeck process}
 
 We start by recalling the basic time-reversal property  of L\'evy processes, also known as the duality identity, which  
 plays an important role in this subject. If we denote $\hat \xi$ the so-called dual L\'evy process which has the same law as $-\xi$, then 
 for every $t>0$, there is the identity in distribution between c\`adl\`ag processes 
$$(-\xi_t+\xi_{(t-s)-})_{0\leq s \leq t}\ {\overset{(d)}{=}}\ (\hat \xi_s)_{0\leq s \leq t}.$$

Following Carmona {\it et al.} \cite{CPY} and Lindner and Maller \cite{LinMa}, as well as other authors, we  associate to the L\'evy process $\xi$   another generalized Ornstein-Uhlenbeck process $(V(t))_{t\geq 0}$, 
 $$V(t)\coloneqq  \exp(-\xi_t)\left( I(t)+V(0)\right)=  \int_0^t \exp(\xi_s-\xi_t)\dd s +V(0)\exp(-\xi_t),$$
 where the initial value $V(0)$ is arbitrary and may be random.
It was   observed in \cite{CPY} and  \cite{LinMa} that the time-reversal property and the a.s. finiteness of the dual exponential functional
$$\hat I\coloneqq \hat I(\infty)= \int_0^{\infty}\exp(\hat \xi_s)\dd s$$
(which is known to follow from our assumption that $\xi$ drifts to $\infty$, see Theorem 1 in \cite{BY2}, or Theorem 2 in \cite{ErMa}),
 immediately implies that
 \begin{equation}\label{e:vconv}
 \lim_{t\to \infty}V(t) = \hat I\qquad \text{in distribution,}
 \end{equation}
  independently of the initial value $V(0)$.  The distribution of $\hat I$,
  $$\mu(\dd x)\coloneqq \pr(\hat I \in \dd x), \qquad x\in (0,\infty),$$
   thus plays a fundamental role in this setting; it has been studied in depth in the literature, see in particular \cite{BehLin,KPS,MauZw,Riv} and references therein.
  
   Lindner and Maller (Theorem 2.1 in \cite{LinMa}) pointed at the fact that if $V(0)$ has the same law as $\hat I$ and is independent of $\xi$, then the process $(V_t)_{t\geq 0}$ is stationary.
It will be convenient for us to rather work with a two-sided version  $(\tilde V_t)_{t\in \R}$ which can easily be constructed as follows.

Assume henceforth that $(\hat \xi_t)_{t\geq 0}$ is an independent copy of $(-\xi_t)_{t\geq 0}$, and write $(\tilde \xi_t)_{t\in \R}$ for the two-sided L\'evy process given by
$$\tilde \xi_t= \left\{ 
\begin{matrix} 
\xi_t & \text{ if }t\geq 0,\\
\hat\xi_{|t|-} & \text{ if }t< 0.\\
 \end{matrix} \right.$$
 We then set for every $t\in \R$
 $$\tilde I(t)\coloneqq \int_{-\infty}^t \exp(\tilde \xi_s)\dd s\quad \text{ and } \quad \tilde V(t)= \exp(-\tilde \xi_t) \tilde I(t).$$
 Note that $\tilde V(0)=\tilde I(0)=\hat I$,  so the process $(\tilde V_t)_{t\geq 0}$ is a version of $V$ started from its stationary distribution. The next statement records some important properties of $\tilde V$ that will be useful for this study.

 \begin{theo} \label{T1} \begin{enumerate}
\item[(i)] The process $(\tilde V_t)_{t\in \R}$ is a stationary and strongly mixing Feller process, with stationary one-dimensional distribution $\mu$.
\item[(ii)] For every $f\in L^1( \mu)$, we have 
 $$\lim_{t\to \infty} \frac{1}{t}\int_0^t f(\tilde V(s)) \dd s = \langle \mu,f\rangle \quad \text{a.s.}$$
 \end{enumerate}
 \end{theo}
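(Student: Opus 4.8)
The plan is to establish the three structural claims of (i) in turn --- the Markov and Feller properties, then stationarity, then strong mixing --- and finally to deduce (ii) from mixing via Birkhoff's ergodic theorem.

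First I would record the Markov property of $V$. The additive structure of $\xi$ yields, for $s,t\geq 0$, the cocycle identity $V(t+s)=\e^{-(\xi_{t+s}-\xi_t)}V(t)+\int_t^{t+s}\e^{\xi_u-\xi_{t+s}}\dd u$, in which the integral and the multiplicative factor depend only on the increments of $\xi$ after time $t$, hence are independent of $\sigma(V(r):r\leq t)$. Thus $V$ is Markov with transition operator $P_sf(x)=\E[f(\e^{-\xi_s}x+\hat V(s))]$, where $\hat V(s)\coloneqq\int_0^s\e^{\xi_u-\xi_s}\dd u$. Continuity of $x\mapsto P_sf(x)$ for $f\in C_b$ follows by dominated convergence, as does the decay $P_sf(x)\to 0$ as $x\to\infty$ for $f\in C_0$ (because $\e^{-\xi_s}x\to\infty$ a.s.), giving the Feller property. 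For stationarity I would substitute $s=u+h$ in $\tilde V(t+h)=\int_{-\infty}^{t+h}\e^{\tilde\xi_s-\tilde\xi_{t+h}}\dd s$ to get $\tilde V(t+h)=\int_{-\infty}^t\e^{\eta_u-\eta_t}\dd u$ with $\eta_u\coloneqq\tilde\xi_{u+h}-\tilde\xi_h$; since $\tilde\xi$ has stationary increments in the two-sided sense, $\eta\overset{(d)}{=}\tilde\xi$, and as $(\tilde V(t+h))_{t\in\R}$ is the same measurable functional of $\eta$ that $(\tilde V(t))_{t\in\R}$ is of $\tilde\xi$, stationarity follows. The one-dimensional marginal is $\mu$ because $\tilde V(0)=\hat I$.

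The heart of the argument is strong mixing, whose key input is \eqref{e:vconv}: since it holds for every deterministic starting value, it gives $P_tg(x)\to\langle\mu,g\rangle$ as $t\to\infty$, for every $x$ and every $g\in C_b$. By a monotone-class argument it suffices to prove $\E[F\cdot(G\circ\theta_t)]\to\E[F]\E[G]$ for cylinder functions $F=\prod_i f_i(\tilde V_{s_i})$ with $s_i\leq 0$ and $G=\prod_j g_j(\tilde V_{u_j})$ with $u_j\geq 0$, where all $f_i,g_j\in C_b$. Integrating out the future coordinates one at a time via the Markov property --- each step preserving continuity by the Feller property --- I would collapse $G\circ\theta_t$ into $\Psi(\tilde V_t)$ for a single $\Psi\in C_b$ with $\langle\mu,\Psi\rangle=\E[G]$, and conditioning on $\tilde V_0$ collapse $F$ into $\Phi(\tilde V_0)$ with $\Phi\in L^\infty(\mu)$ and $\langle\mu,\Phi\rangle=\E[F]$. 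Two applications of the Markov property then give $\E[F\cdot(G\circ\theta_t)]=\int\Phi(x)\,P_t\Psi(x)\,\mu(\dd x)$, and since $P_t\Psi\to\langle\mu,\Psi\rangle$ pointwise and boundedly, dominated convergence yields the limit $\langle\mu,\Phi\rangle\langle\mu,\Psi\rangle=\E[F]\E[G]$. The two-time case also admits a transparent coupling proof: $\tilde V_t=\e^{-\xi_t}\tilde V_0+\hat V(t)$, where $\hat V(t)$ is independent of $\tilde V_0$ and $\e^{-\xi_t}\tilde V_0\to 0$ a.s.

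Finally, part (ii) is a soft consequence of (i): strong mixing implies ergodicity of the shift, so its invariant $\sigma$-field is trivial, and Birkhoff's ergodic theorem for the stationary process $\tilde V$ (applicable since $\E\int_0^1|f(\tilde V_s)|\dd s=\langle\mu,|f|\rangle<\infty$ for $f\in L^1(\mu)$ by Fubini) identifies the a.s. limit of $t^{-1}\int_0^t f(\tilde V(s))\dd s$ as the constant $\E[f(\tilde V_0)]=\langle\mu,f\rangle$. I expect the main obstacle to be the mixing step, specifically the reduction from arbitrary measurable events to continuous cylinder functions and the careful bookkeeping in collapsing the coordinates through the Markov and Feller properties; by contrast, stationarity is essentially built into the two-sided construction, and (ii) follows routinely once mixing is in hand.
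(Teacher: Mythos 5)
Your proposal is correct, and it rests on the same skeleton as the paper's proof: stationarity is obtained exactly as in the paper from the stationary increments of the two-sided process $\tilde\xi$, the convergence \eqref{e:vconv} is the sole asymptotic input, a monotone class argument bridges continuous and general bounded measurable data, and (ii) follows from mixing via Birkhoff. The genuine difference lies in how the mixing step is organized. The paper never looks at multi-coordinate functionals: it proves only the correlation decay $\E\bigl(f(\tilde V(0))g(\tilde V(t))\bigr)\to\langle\mu,f\rangle\langle\mu,g\rangle$ for bounded measurable $f,g$, running the monotone class over the test function $g$ (the base case, $g$ continuous, is in essence your closing ``coupling'' remark that $\tilde V(t)=\e^{-\xi_t}\tilde V(0)+{}$an independent term converging in law to $\mu$), and it disposes of the uniformity-in-$t$ issue --- precisely the ``main obstacle'' you flag --- by the stationarity identity $\E\bigl(f(\tilde V(0))g(\tilde V(t))\bigr)=\E\bigl(f(\tilde V(-t))g(\tilde V(0))\bigr)$, which makes the error of replacing $g$ by $g_n$ independent of $t$; the Feller property plays no role there. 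You instead prove mixing of the shift flow directly, collapsing cylinder functionals through the Markov/Feller structure down to $\int\Phi\,P_t\Psi\,\dd\mu$ and invoking the pointwise convergence $P_t\Psi(x)\to\langle\mu,\Psi\rangle$ granted by \eqref{e:vconv}. This costs more bookkeeping, but it buys something the paper leaves implicit: the step from correlation decay of one-dimensional marginals to ergodicity of the shift (which is what Birkhoff actually requires in (ii)) does use the Markov property, and your collapsing argument is exactly that bridge, whereas the paper compresses it into ``strong mixing implies ergodicity.'' Two small caveats on your Feller step, which the paper sidesteps by citing Theorem 3.1 of \cite{BehLin}: you should also verify strong continuity as $s\to0$ (easy from right-continuity of paths and dominated convergence), and on the state space $(0,\infty)$ the semigroup does not map $C_0$ into itself at the lower boundary, since $P_sf(x)\to\E\bigl[f\bigl(\int_0^s\e^{\xi_u-\xi_s}\dd u\bigr)\bigr]\neq 0$ in general as $x\downarrow0$; so your $C_0$-type claim should be formulated on $[0,\infty)$, or Feller should be understood in the $C_b$ sense, which is all your mixing argument actually needs.
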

\begin{proof} (i)    By the time-reversal property, the two-sided process $\tilde \xi$ has stationary increments, in the sense that for every $t\in \R$, $(\tilde \xi_{t+s}-\tilde \xi_t)_{s\in \R}$ has the same law as $(\tilde \xi_s)_{s\in \R}$.  This readily entails the stationarity of $\tilde V$.
The Feller property has already been pointed at in Theorem 3.1 in \cite{BehLin}, so it only remains to justify the strong mixing assertion. Unsurprisingly\footnote{If  the Markov process $V$ is $\mu$-irreducible, then one can directly apply well-known facts about stochastic stability; see Part III in Meyn and Tweedie \cite{MeynT}. However, establishing irreducibility  for arbitrary generalized Ornstein-Uhlenbeck processes seems to be a challenging task; see Section 2.3 in Lee \cite{Lee} for a partial result.}, this follows from \eqref{e:vconv} by a monotone class argument that we recall for completeness.

Let  ${\mathcal L}^{\infty}$ denote the space of bounded  measurable functions $g: (0,\infty)\to \R$ and 
${\mathcal C_b}$ the subspace of continuous bounded functions. Introduce the vector space
$${\mathcal H}\coloneqq \{g\in {\mathcal L}^{\infty}: \lim_{t\to \infty} \E(f(\tilde V(0))g(\tilde V(t)))=\langle \mu,f\rangle \langle \mu,g\rangle\text{ for every }f\in {\mathcal L}^{\infty}\}.$$
We easily deduce from \eqref{e:vconv} that ${\mathcal C_b}\subseteq {\mathcal H}$. Then consider a non-decreasing sequence $(g_n)_{n\in \N}$ in ${\mathcal H}$ with $\sup_{n\in\N} \| g_n\|_{\infty}<\infty$ and let $g= \lim_{n\to \infty} g_n$. For every $f\in {\mathcal L}^{\infty}$, we have by stationarity 
$$\E(f(\tilde V(0))g(\tilde V(t)))= \E(f(\tilde V(-t))g(\tilde V(0))).$$
So assuming for simplicity that $\|f\|_{\infty}\leq 1$, 
the absolute difference
$$|\E(f(\tilde V(0))g(\tilde V(t)))-\langle \mu,f\rangle \langle \mu,g\rangle|$$
  can be bounded from above by 
$$
 \E( g(\tilde V(0)))-g_n(\tilde V(0))) +  \langle \mu,g-g_n\rangle 
+|\E(f(\tilde V(0))g_n(\tilde V(t)))-\langle \mu,f\rangle \langle \mu,g_n\rangle| .
$$
The first two terms in the sum above coincide and can be made as small as we wish by choosing $n$ large enough. Since $g_n\in{\mathcal H}$, this entails
$$\lim_{t\to \infty} |\E(f(\tilde V(0))g(\tilde V(t)))-\langle \mu,f\rangle \langle \mu,g\rangle| \leq \varepsilon$$
for every $\varepsilon>0$. Hence $g\in{\mathcal H}$, and since ${\mathcal C_b}$ is an algebra that contains the constant functions, we conclude by a functional version of the monotone class theorem that ${\mathcal H}={\mathcal L}^{\infty}$.

(ii) Since strong mixing implies ergodicity, this follows from Birkhoff's ergodic theorem. 
\end{proof}

 We mention that the argument for Theorem \ref{T1} applies more generally to the larger class of generalized Ornstein-Uhlenbeck processes considered by Lindner and Maller \cite{LinMa}. Further,  sufficient  conditions ensuring exponential ergodicity can be found in Theorem 4.3 of Lindner and Maller \cite{LinMa}, Lee \cite{Lee}, Wang \cite{Wang},  and Kevei \cite{Kevei}.
  
  \section{A time substitution and its consequences}
  Patie \cite{Patie} pointed out that the Ornstein-Uhlenbeck type processes $U$ and $V$ are related by a simple time-substitution. We shall see here that the same transformation, now applied to the stationary  process $\tilde V$, yields a stationary version $\tilde U$ of $U$, and then draw some consequences of this construction.

  Introduce 
  the additive functional
  $$A(t)\coloneqq \int_0^t \frac{\dd s}{\tilde V(s)}=\ln \tilde I(t) - \ln \tilde I(0) \,, \qquad t\in \R;$$
  clearly $A: \R\to \R$ is bijective and we denote the inverse bijection by $T$. Observe that $A(T(t))=t$ yields
   the useful identity
  \begin{equation}\label{e:T(t)}
  \int_{-\infty}^{T(t)} \exp(\tilde \xi_s)\dd s= \tilde I(0) \e^t \qquad \text{for all }t\in\R.
  \end{equation}
  
  We also define a measure $\nu$ on $(0,\infty)$ by
  $$\langle \nu,f\rangle=  \int_{(0,\infty)} \frac{1}{x}f(1/x)  \mu(\dd x),$$ 
  and further  introduce an equivalent sigma-finite measure on the underlying probability space $(\Omega, {\mathcal A}, \pr)$ 
  by
  $$\Q(\Lambda) = \E\left(\frac{1}{\tilde V(0)} {\mathbf 1}_{ \Lambda}\right), \qquad \Lambda\in  {\mathcal A}.$$
  Note that 
  $$\Q(f(1/\tilde V(0)))= \langle \nu,f\rangle$$
  for every measurable $f: (0,\infty)\to \R_+$.

  \begin{theo} \label{T2} 
   \begin{itemize}
  \item[(i)] The measure $\Q$ (respectively, $\nu$) is finite if and only if $\xi_1\in L^1(\pr)$, and in that case, $\Q(\Omega)=\nu((0,\infty))=\E(\xi_1)$.
  
  \item[(ii)] Under $\Q$, 
  $$\tilde U(t)\coloneqq 1/\tilde V(T(t)), \qquad t\in \R$$
  is a stationary and ergodic Markov process, with one-dimensional marginal $\nu$ and the same semigroup as the Ornstein-Uhlenbeck type process $U$ defined in \eqref{e:defU}.
  
  \item[(iii)] For all functions $f,g\in L^1(\nu)$ with $\langle \nu,g\rangle\neq 0$, we have
  $$\lim_{t\to \infty} \frac{\int_0^t f(\tilde U(s))\dd s}{\int_0^t g(\tilde U(s))\dd s}= \frac{\langle \nu,f\rangle}{\langle \nu,g\rangle} \quad\Q\text{-a.s. and therefore also }\pr\text{-a.s.}$$
  \end{itemize}
  \end{theo}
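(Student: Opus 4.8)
The plan is to derive all three parts of Theorem~\ref{T2} as consequences of the time-substitution relating $\tilde U$ to the stationary process $\tilde V$ of Theorem~\ref{T1}, transferring the ergodic-theoretic information from $\tilde V$ to $\tilde U$ through the additive functional $A$ and its inverse $T$. The conceptual backbone is that a time change by an additive functional preserves stationarity and ergodicity after reweighting the measure by the speed of the clock, which is exactly the role played by the density $1/\tilde V(0)$ in the definition of $\Q$.

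For part (i), I would compute $\Q(\Omega)=\E(1/\tilde V(0))=\langle\nu,1\rangle$, which by the definition of $\nu$ equals $\int_{(0,\infty)} x^{-1}\,\mu(\dd x)=\E(1/\hat I)$, the expected reciprocal of the exponential functional. The plan is to recognize $\E(1/\hat I)$ via a known identity for exponential functionals of L\'evy processes: reciprocal moments of $\hat I$ relate directly to moments of the underlying L\'evy process, and the standard result (obtainable by differentiating the functional equation $\E(\hat I^{s})$ satisfies, or via the observation that $1/\hat I$ arises as a limit of $\xi_t/t$-type quantities) gives $\E(1/\hat I)=\E(\xi_1)$ whenever the latter is finite, with both being simultaneously infinite. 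So finiteness of $\Q$ and of $\nu$ is equivalent to $\xi_1\in L^1(\pr)$, and the common value is $\E(\xi_1)$.

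For part (ii), I would first observe that under $\Q$ the one-dimensional marginal of $\tilde U(0)=1/\tilde V(0)$ is exactly $\nu$, since $\Q(f(1/\tilde V(0)))=\langle\nu,f\rangle$ by construction. Stationarity is the heart of the matter: because $\tilde V$ is stationary under $\pr$ and the clock $A(t)=\ln\tilde I(t)-\ln\tilde I(0)$ is built from the stationary-increment process $\tilde\xi$, the time-changed process inherits stationarity once we weight by $1/\tilde V(0)$. Concretely, I would verify that for a shift of $\tilde U$ by $r$ the change-of-variables in the time integral $\dd A(s)=\dd s/\tilde V(s)$ together with the reweighting exactly compensates, making the law of $(\tilde U(t+r))_t$ under $\Q$ agree with that of $(\tilde U(t))_t$; this is the abstract Maruyama--type invariance of time-substitutions under the appropriate infinite measure. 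The Markov property and the identification of the semigroup with that of $U$ follow from Patie's connection \cite{Patie}, since the same deterministic time-change sends $V$ to $U$; I would invoke the scaling/Lamperti structure to match transition kernels. Ergodicity is then transferred from the strong mixing (hence ergodicity) of $\tilde V$ in Theorem~\ref{T1}(i): an invariant event for the $\tilde U$-dynamics pulls back through $T$ to an invariant event for $\tilde V$, which must be trivial, so it is trivial for $\tilde U$ as well.

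For part (iii), the strategy is to reduce the ratio-ergodic statement for $\tilde U$ to the ordinary ergodic average for $\tilde V$ from Theorem~\ref{T1}(ii) via the substitution $s=A(u)$, $\dd s=\dd u/\tilde V(u)$, so that
\[
\int_0^t f(\tilde U(s))\,\dd s=\int_0^{T(t)} \frac{f(1/\tilde V(u))}{\tilde V(u)}\,\dd u.
\]
Writing $F(x)\coloneqq x^{-1}f(1/x)$ and similarly $G$, I would note $F,G\in L^1(\mu)$ precisely because $f,g\in L^1(\nu)$, by the defining relation between $\nu$ and $\mu$. Then the numerator and denominator become $\int_0^{T(t)}F(\tilde V(u))\,\dd u$ and $\int_0^{T(t)}G(\tilde V(u))\,\dd u$; dividing each by $T(t)$ and applying Theorem~\ref{T1}(ii) (which holds $\pr$-a.s., and $T(t)\to\infty$ as $t\to\infty$ since $A$ is a bijection of $\R$) gives the numerator $\to\langle\mu,F\rangle=\langle\nu,f\rangle$ and denominator $\to\langle\mu,G\rangle=\langle\nu,g\rangle$ after normalization. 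The ratio therefore converges to $\langle\nu,f\rangle/\langle\nu,g\rangle$, and since $\Q$ and $\pr$ are equivalent the $\Q$-a.s.\ statement is also $\pr$-a.s.

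The main obstacle I anticipate is the rigorous justification of stationarity of $\tilde U$ under the infinite measure $\Q$ in part (ii). Unlike the finite case, one cannot simply normalize and speak of a stationary probability, so the invariance must be phrased and checked as an identity of (possibly infinite) laws, and the reweighting by $1/\tilde V(0)$ must be shown to interact correctly with the time-change cocycle. The clean way is to package this as a general principle about time-substitutions by additive functionals of stationary flows: if $(\tilde V(t))$ is stationary under $\pr$ with the additive functional $A$ having stationary increments, then the time-changed process is stationary under the measure reweighted by the instantaneous clock speed. Verifying the cocycle identity $T(t+r)=$ (shift of $T(r)$) and tracking how the density transforms under the shift is where the care is needed; everything else reduces to change-of-variables and the already-established ergodic theorem for $\tilde V$.
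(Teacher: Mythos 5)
Parts (ii) and (iii) of your proposal are sound. For (ii) you follow essentially the same route as the paper: the Maruyama--Totoki theorems on time substitutions of stationary flows by additive functionals (invariance of the reweighted measure, preservation of ergodicity), plus Patie's connection to identify the semigroup with that of $U$. For (iii), your change of variables $s=A(u)$, which turns $\int_0^t f(\tilde U(s))\,\dd s$ into $\int_0^{T(t)}F(\tilde V(u))\,\dd u$ with $F(x)=x^{-1}f(1/x)\in L^1(\mu)$, combined with Birkhoff's theorem for $\tilde V$ and $T(t)\to\infty$, is exactly the alternative argument the paper itself points out (its primary route is Hopf's ratio ergodic theorem for $\tilde U$ under $\Q$); as a bonus, your version of (iii) does not even need (ii).

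The genuine gap is in part (i), in the direction ``$\xi_1\notin L^1(\pr)\Rightarrow\Q$ infinite''. You reduce (i) to the claim that $\E(1/\hat I)=\E(\xi_1)$ ``with both being simultaneously infinite'' and call this a standard result; it is not one in this generality, and both derivations you sketch (differentiating the functional equation for $\E(\hat I^s)$, or $\xi_t/t$-type limits) presuppose that the mean of $\xi_1$ exists. There are two distinct non-integrable cases. First, $\E(\xi_1^-)<\infty$ but $\E(\xi_1^+)=\infty$, so that $\E(\xi_1)=+\infty$: here the paper argues by truncating the large jumps of $\xi$, producing L\'evy processes $\xi^{(n)}\uparrow\xi$ with finite means tending to $\infty$, so that $\hat I^{(n)}\downarrow\hat I$ and monotone convergence gives $\E(1/\hat I)=\lim_n\E(\xi_1^{(n)})=\infty$; your heuristic can be repaired along these lines. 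Second, and this is the case your proposal does not see at all: $\E(\xi_1^+)=\E(\xi_1^-)=\infty$, so the mean is \emph{undefined}, yet $\xi$ may still drift to $+\infty$ (this is precisely what Erickson's criterion governs). In that case ``$\E(\xi_1)=\infty$'' is meaningless, $\xi_t/t$ has no limit, and the functional-equation approach breaks down because the relevant moments do not exist. The paper needs a genuinely separate argument there: using Erickson's test, decompose $\xi=\xi'+\eta$ with $\xi'$ of infinite (existing) mean and $\eta$ an independent compound Poisson process with undefined mean drifting to $\infty$; on the positive-probability event $\Lambda=\{\eta_t\geq 0 \text{ for all }t\geq 0\}$ one has $\hat I\leq\hat I'$, whence $\E(1/\hat I)\geq\E(1/\hat I')\,\pr(\Lambda)=\infty$ by the first case. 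Without an argument of this kind, the ``only if'' half of (i) --- which is the substantive half of the statement --- remains unproven.
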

  
  \begin{proof} (i) Recall that $\tilde V(0)=\tilde I(0)=\hat I$, so $\Q(\Omega)=\nu((0,\infty))=\E(1/\hat I)$. When $\xi_1\in L^1(\pr)$, Equation (3) in \cite{BY1} gives $\E(1/\hat I)=\E(\xi_1)$.
  
  Next, suppose that $\xi_1^-\in L^1(\pr)$ and $\xi_1^+\notin L^1(\pr)$, that is the mean $\E(\xi_1)$ exists and is infinite. 
  We can construct by truncation of the large jumps of $\xi$, an increasing sequence $(\xi^{(n)})_{n\in\N}$ of L\'evy processes such that $\xi^{(n)}_1\in L^1(\pr)$ with $\E(\xi^{(n)}_1)>0$ and
  $\lim_{n\to \infty} \xi^{(n)}_t=\xi_t$ for all $t\geq 0$ a.s. In the obvious notation, 
  $\hat I^{(n)}$ decreases to $\hat I$  as $n\to \infty$, and $\lim_{n\to \infty}\E(\xi^{(n)}_1)=\infty$. We conclude by monotone convergence that $\E(1/\hat I)=\infty$.
  
  Finally, suppose that both $\xi_1^-\notin L^1(\pr)$ and $\xi_1^+\notin L^1(\pr)$, so the mean of $\xi_1$ is undefined. Equivalently, in terms of the L\'evy measure, say $\Pi$, of $\xi$, we have 
  $$\int_{(-\infty,-1)}|x| \Pi(\dd x)= \int_{(1,\infty)} x \Pi(\dd x)= \infty\,,$$
  see Theorem 25.3 in Sato \cite{Sato}. Using Erickson's test characterizing L\'evy processes which drift to $\infty$ when the mean is undefined (see Theorem 15 in Doney \cite{Doney}), it is easy to decompose $\xi$
 into the sum $\xi=\xi'+\eta$ of two independent L\'evy processes, such that $\xi'$ is a L\'evy process with infinite mean and  $\eta$ is a compound Poisson process with undefined mean that drifts to $\infty$. The event $\Lambda\coloneqq \{\eta_t\geq 0 \text{ for all }t\geq 0\}$ has a positive
 probability (because $\eta$ is compound Poisson and drifts to $\infty$). On that event, we have  $\xi\geq \xi'$ and thus also, in the obvious notation, $\hat I \leq \hat I'$. 
 This yields  $$\E(1/\hat I, \Lambda) \geq \E(1/\hat I' )\pr(\Lambda),$$
  and we have see above that the first term in the product is infinite. We conclude that $\E(1/\hat I)=\infty$.

  (ii) It is convenient to view now $\Omega$ as the space of c\`adl\`ag paths $\omega: \R\to (0,\infty)$ endowed with the usual shift automorphisms $(\theta_t)_{ t\in\R}$, i.e. 
  $\theta_t(\omega)=\omega(t+\cdot)$, and $\pr$ as the law of $\tilde V$. 
  We have seen in Theorem \ref{T1}(i) that $\pr$ is $(\theta_t)$-ergodic. 
  
  General results  due to Maruyama and Totoki on time changes of flows based on additive functionals show that the measure $\Q$ is invariant for the time-changed flow
   of automorphisms $(\theta'_t)_{ t\in\R}$, where $\theta'_t(\omega)\coloneqq \omega(T(t)+\cdot))$. See Theorems 4.1(iii) and 4.2 in \cite{Toto}. Further, ergodicity is always preserved by such time substitutions, see Theorem 5.1 in \cite{Toto}. This shows that $(\tilde V(T(t)))_{t\geq 0}$ is a stationary ergodic process under $\Q$.
   
   On the other hand,  time substitution based on an additive functional also preserves the strong Markov property, so $(\tilde V(T(t)))_{t\geq 0}$ is a Markov process under $\Q$.
   By stationarity, $(\tilde V(T(t)))_{t\in\R}$ is Markov too. Composing with the inversion $x\mapsto 1/x$, we conclude that $\tilde U$ is a stationary and ergodic Markov process under $\Q$. 
   
   It remains to determine the semigroup of $\tilde U$, and for this, we simply recall from Theorem 1.4 of Patie \cite{Patie}  that the processes $U$ and $V$ can be related by the same time-substitution as that relating $\tilde U$ and $\tilde V$. As a consequence, $\tilde U$ and $U$ have the same semigroup.
    
  (iii) Under $\Q$, this is a consequence of (ii) and Hopf's ratio ergodic theorem. See also Lemma 5.1 in \cite{Toto}. The measures $\pr$ and $\Q$ being equivalent, the statement of convergence also holds $\pr$-a.s. We mention that, alternatively, this can also be deduced from Birkhoff's ergodic theorem for $\tilde V$ (Theorem \ref{T1}(ii)) by change of variables.
  \end{proof} 
  
\begin{rem}
\begin{enumerate}
\item[(i)] In the case $\xi_1\in L^1(\pr)$, Theorem \ref{T2}(i-ii) agrees with the results in \cite{BY1}; the arguments in the present work are however much simpler. We stress that one should not conclude from Theorem \ref{T2}(i-ii) that $U(t)$ then converges in distribution to the normalized version of $\nu$. Actually this fails when the L\'evy process is lattice-valued (i.e. $\xi_t\in r \Z$ a.s. for some $r>0$, think for instance of the case when $\xi$ is a Poisson process), because then the Ornstein-Uhlenbeck type  process $U$ is periodic. 

\item[(ii)]  Inverting the transformation {\it \`a la} Ornstein-Uhlenbeck incites us to set
$$\tilde X (t)\coloneqq t\tilde U(\ln t)= t/\tilde V(T(\ln t)), \qquad t>0,$$
and the calculation in the proof of  Theorem \ref{T2}(ii)  yields the expression {\it \`a la} Lamperti
$$\tilde X(t)\coloneqq \exp(\tilde \xi_{\tilde \tau(t)}), $$
with $\tilde \tau: (0,\infty)\to \R$ the inverse of the exponential functional $\tilde I$.
Theorem \ref{T2}(ii) entails that under $\Q$, $\tilde X$  is a self-similar version $X$. We refer to \cite{BeSa} for an alternative similar construction which does not require working under an equivalent measure.

\item[(iii)] If we write ${\mathcal G}$ for the infinitesimal generator of the Feller process $V$, then the stationary of the law $\mu$ is is characterized by the identity ${\langle \mu,{\mathcal G}f\rangle}=0$ for every $f$ in the domain of ${\mathcal G}$. Informally\footnote{The application of Volkonskii's formula is not legitimate, since the function $x\mapsto 1/x$ is not bounded away from $0$.}, according to a formula of Volkonskii (see (III.21.6) in \cite{RW}), the infinitesimal generator ${\mathcal G}'$ of the time-changed process $V\circ T$ is given by ${\mathcal G}' f(x)=x {\mathcal G}f(x)$,
so the measure $\mu'(\dd x) \coloneqq x^{-1} \mu(\dd x)$ fulfills  ${\langle \mu',{\mathcal G}'f\rangle}=0$ for every $f$ in the domain of ${\mathcal G}$, and thus should be invariant for the time-changed process $ V\circ T$. We then recover the assertion that $\nu$ is invariant for $\tilde U = 1/(\tilde V\circ T)$.

\end{enumerate}
\end{rem}

We conclude this section by discussing recurrence. 
Recall first  that the support of the stationary law $\mu$ of the generalized Ornstein-Uhlenbeck process $V$ is always an interval, say ${\mathcal I}$; see  Haas and Rivero \cite{HaRi} or  Lemma 2.1 in \cite{BLM}.
More precisely, excluding implicitly the degenerate case when $\xi$ is a pure drift, 
${\mathcal I}=[0,1/b]$ if $\xi$ is a non-deterministic subordinator with drift $b>0$, 
${\mathcal I}=[1/b, \infty)$ if $\xi$ is non-deterministic and of finite variation L\'evy process with no positive jumps and 
 drift $b > 0$, and  ${\mathcal I}=[0, \infty)$ in the remaining cases. Writing ${\mathcal I}^o$ for the interior of ${\mathcal I}$, it is further readily checked that $V(t) \in {\mathcal I}^o$ for all $t\geq 0$ a.s. whenever $V(0) \in {\mathcal I}^o$.
 
 \begin{cor} The Ornstein-Uhlenbeck type process $U$ is topologically recurrent, in the sense that for every $x>0$ with $1/x\in {\mathcal I}^o$, $U$ visits every neighborhood of $x$ a.s., no matter its initial value $U(0)$.
 \end{cor}
 \begin{proof}
 It follows from \eqref{e:vconv} and the Portmanteau theorem that every point $x\in {\mathcal I}^o$ is topologically recurrent for the generalized Ornstein-Uhlenbeck process $V$. Plainly, this property is preserved by time-substitution. 
 \end{proof}
 
\section{On the Darling-Kac theorem}
We assume throughout this section that $\xi_1\not \in L^1(\pr)$, so $\nu$ (and also $\Q$) is an infinite measure. Aaronson's ergodic theorem (see, e.g. Theorem 2.4.2 in \cite{Aaron}) states that for every $f\in L^1(\nu)$, $f\geq 0$, and every potential normalizing function $a: \R_+\to (0,\infty)$, one always have either
$$\limsup_{t\to \infty} \frac{1}{a(t)} \int_0^t f(\tilde U(s))\dd s = \infty \qquad \text{a.s.}$$
or 
$$\liminf_{t\to \infty} \frac{1}{a(t)} \int_0^t f(\tilde U(s))\dd s = 0 \qquad \text{a.s.}$$

Without further mention, we shall henceforth implicitly work under the probability measure $\pr$, and 
say that a family $(Y(t))_{t>0}$ of random variables has a non-degenerate limit in distribution as $t\to \infty$  if $Y(t)$ converges in law towards some not a.s. constant random variable. 

Motivated by the famous Darling-Kac's theorem, the purpose of this section is to provide an explicit necessary and sufficient condition in terms of the L\'evy process $\xi$ for the existence of a normalizing function $a: \R_+\to (0,\infty)$ such that  the normalized occupation measure of $U$ converges in distribution as $t\to \infty$ to
a non-degenerate limit. 

We start with the following simple observation. 

\begin{lemma} \label{L1} The following assertions are equivalent
\begin{itemize}
\item[(i)] For every $f\in L^1(\nu)$ with  $\langle \nu,f\rangle\neq0$,
$$\frac{1}{a(t)}\int_0^t f(\tilde U(s))\dd s, \qquad t>0$$
has a non-degenerate limit in distribution as $t\to \infty$.
\item[(ii)] $\left(\frac{T(t)}{a(t)}\right)_{t>0}$ has a  non-degenerate limit in distribution as $t\to \infty$. 
\end{itemize}
\end{lemma}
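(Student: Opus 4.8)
The plan is to reduce assertion (i) to a statement purely about $\tilde V$ via the change of variables built into the time substitution, and then to extract the factor $T(t)$ using Birkhoff's ergodic theorem (Theorem \ref{T1}(ii)). First I would perform the substitution $s = A(u)$, equivalently $u = T(s)$, in the occupation integral of $\tilde U$. Since $\tilde U(s) = 1/\tilde V(T(s))$ and $\dd A(u) = \dd u/\tilde V(u)$, writing $h(x)\coloneqq f(1/x)/x$ and using $T(0)=0$ (which follows from $A(0)=0$) yields the identity
$$\int_0^t f(\tilde U(s))\,\dd s = \int_0^{T(t)} h(\tilde V(u))\,\dd u, \qquad t\geq 0.$$
The very definition of $\nu$ gives $\langle \mu, |h|\rangle = \langle \nu, |f|\rangle$, so that $f\in L^1(\nu)$ if and only if $h\in L^1(\mu)$, and moreover $\langle \mu, h\rangle = \langle \nu, f\rangle$; this is what makes Theorem \ref{T1}(ii) applicable to $h$.

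Next I would record that $A$ is an increasing bijection of $\R$, so $T(t)\to\infty$ as $t\to\infty$ a.s., and hence Birkhoff's ergodic theorem for $\tilde V$ applies along the (random) times $T(t)$. Combining the ergodic limit with the change-of-variables identity produces the key asymptotic factorization
$$\frac{1}{a(t)}\int_0^t f(\tilde U(s))\,\dd s = \frac{T(t)}{a(t)}\cdot R_f(t), \qquad \text{where } R_f(t)\coloneqq \frac{1}{T(t)}\int_0^{T(t)} h(\tilde V(u))\,\dd u \longrightarrow \langle \nu, f\rangle \ \text{ a.s.}$$

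Finally, both implications follow from Slutsky's theorem. For (ii) $\Rightarrow$ (i), if $T(t)/a(t)$ converges in distribution to a non-degenerate limit $L$, then the product above converges in distribution to $\langle \nu, f\rangle\, L$, which is non-degenerate because $\langle \nu, f\rangle\neq 0$; this holds simultaneously for every admissible $f$. For (i) $\Rightarrow$ (ii), I would fix a single $f\in L^1(\nu)$ with $\langle \nu, f\rangle\neq 0$ (for instance the indicator of a set of positive finite $\nu$-measure, which exists since $\nu$ is a nonzero $\sigma$-finite measure), and invert the factorization: since $R_f(t)\to \langle \nu, f\rangle\neq 0$ a.s., the factor $1/R_f(t)$ converges a.s., and Slutsky's theorem shows that $T(t)/a(t)$ converges in distribution to $L/\langle \nu, f\rangle$, where $L$ is the assumed distributional limit of the occupation integral; non-degeneracy is preserved under scaling by the nonzero constant $1/\langle\nu,f\rangle$.

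The only point requiring genuine care is that the error term $R_f(t)-\langle\nu,f\rangle$ converges to $0$ \emph{almost surely} rather than merely in probability, so that Slutsky's theorem legitimately applies to a product of an a.s.\ convergent factor and a factor converging only in law. This is precisely what the almost sure convergence in Theorem \ref{T1}(ii) supplies, so I do not anticipate a serious obstacle beyond this bookkeeping and the verification that the change of variables is valid pathwise under $\pr$.
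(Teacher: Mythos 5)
Your proof is correct, and each step checks out: the pathwise change of variables $s=A(u)$ is legitimate (since $A$ is absolutely continuous and strictly increasing with a.e.\ derivative $1/\tilde V(u)>0$), the identity $\langle\mu,h\rangle=\langle\nu,f\rangle$ with $h(x)=f(1/x)/x$ is exactly the definition of $\nu$, and applying Theorem \ref{T1}(ii) along the random times $T(t)$ is sound because $T(t)\to\infty$ a.s.\ and almost sure convergence is preserved under composition with a.s.\ divergent random times; Slutsky then handles both implications as you describe. The paper's own proof reaches the same factorization by a shorter route: it observes that $g(x)\equiv 1/x$ lies in $L^1(\nu)$ with $\langle\nu,g\rangle=1$ and that $\int_0^t \dd s/\tilde U(s)=T(t)$, and then invokes Hopf's ratio ergodic theorem, already packaged as Theorem \ref{T2}(iii), to get $\int_0^t f(\tilde U(s))\,\dd s\big/T(t)\to\langle\nu,f\rangle$ a.s. So the difference lies only in how the a.s.\ ratio limit is produced: the paper leans on infinite ergodic theory under the equivalent (possibly infinite) measure $\Q$, transferred to $\pr$ by equivalence of measures, whereas you stay entirely under $\pr$ and use only Birkhoff's theorem for the stationary, finite-measure process $\tilde V$. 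Your route is more self-contained, bypassing Theorem \ref{T2}(iii) and Hopf's theorem altogether, and the paper itself acknowledges this possibility: at the end of the proof of Theorem \ref{T2}(iii) the author remarks that the ratio theorem ``can also be deduced from Birkhoff's ergodic theorem for $\tilde V$ (Theorem \ref{T1}(ii)) by change of variables,'' which is precisely your argument. What the paper's route buys is brevity (a two-line proof given Theorem \ref{T2}(iii)); what yours buys is independence from the machinery of infinite invariant measures.
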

 \begin{proof} Note that the identity function 
 $g(x)\equiv 1/x$ always belongs to $L^1(\nu)$, actually with $\langle \nu, g\rangle =1$, and 
 $\int_0^t \dd s/\tilde U(s)=T(t)$. The claim thus follows from Hopf's ratio ergodic theorem (Theorem \ref{T2}(iii)) combined with Slutsky's theorem.
 \end{proof}
 
 For the sake of simplicity, we shall focus on the case when the sought normalizing function
  $a:(0,\infty)\to (0, \infty)$ is regularly varying at $\infty$ with index $\alpha\in(0,1)$. Recall from the Darling-Kac theorem (Theorem 8.11.3 in \cite{BGT}) that this is essentially the only situation in which interesting asymptotic behaviors can occur. Recall also from Theorem 1.5.12 in \cite{BGT} that $a$ then possesses an asymptotic inverse $b: (0,\infty)\to (0,\infty)$,
 in the sense that $a(b(t))\sim b(a(t)) \sim t$ as $t\to \infty$, such that
$b$ is regularly varying at $\infty$ with index $1/\alpha$.

We may now state the main result of this work, which specifies the Darling-Kac theorem for Ornstein-Uhlenbeck type processes. 

\begin{theo} \label{TDK} The following assertions are equivalent:
\begin{enumerate}
\item[(i)] $b(t)^{-1} \xi_t$ has a non-degenerate limit in distribution as $t\to \infty$.
\item[(ii)] Let $f\in L^1(\nu)$ with  $\langle \nu,f\rangle\neq0$.  Then 
$$\frac{1}{a(t)}\int_0^t f(\tilde U(s))\dd s, \qquad t>0$$
has a non-degenerate limit in distribution as $t\to \infty$.
\end{enumerate}
In that case, the limit in (i) is a positive $\alpha$-stable variable, say $\sigma$, with
$$\E(\exp(-\lambda \sigma))=\exp(-c\lambda^{\alpha})$$
for some $c>0$, and the limit in (ii) has the law of 
$\langle \nu,f\rangle \sigma^{-\alpha}$ (and is thus proportional to a Mittag-Leffer variable with parameter $\alpha$). 
\end{theo}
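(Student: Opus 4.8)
The plan is to reduce Theorem~\ref{TDK} to a statement purely about the L\'evy process $\xi$ via Lemma~\ref{L1}, and then to recognize that statement as a classical stable limit theorem. By Lemma~\ref{L1}, assertion (ii) (for all admissible $f$) is equivalent to the existence of a non-degenerate limit in distribution for $T(t)/a(t)$. So the whole theorem amounts to showing that $T(t)/a(t)$ has a non-degenerate limit if and only if $b(t)^{-1}\xi_t$ does, and to identifying the limits. First I would make the link between $T$ and $\xi$ explicit. From the defining identity \eqref{e:T(t)}, namely $\int_{-\infty}^{T(t)}\exp(\tilde\xi_s)\dd s=\tilde I(0)\e^{t}$, I expect that the dominant contribution to the integral comes from $s$ near the upper limit, so that $T(t)$ grows like the inverse of $\tilde\xi$ evaluated at a linear function of $t$. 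More precisely, taking logarithms and using that $\tilde\xi_s\to\infty$ drives the exponential integral, one should obtain $\tilde\xi_{T(t)}\sim t$ and hence $T(t)\sim \tilde\xi^{-1}(t)$ up to lower-order corrections; the additive functional $A(t)=\ln\tilde I(t)-\ln\tilde I(0)$ already records this, since $A$ is the inverse of $T$ and $A(t)=\ln\tilde I(t)-\ln\tilde I(0)\sim\tilde\xi_t$ as $t\to\infty$ (the integral $\tilde I(t)$ is comparable to $\exp(\tilde\xi_t)$ up to a slowly varying factor because $\xi$ drifts to $\infty$).

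The heart of the matter is therefore a duality between the asymptotics of $\xi$ and those of its inverse $T\approx\xi^{-1}$. Here I would invoke the standard equivalence, in the theory of regular variation and stable limits, between a process and its inverse: for a nondecreasing (or asymptotically nondecreasing) function, $\xi_t/b(t)$ converges to a positive stable law $\sigma$ with index $\alpha$ if and only if the inverse $T(t)/a(t)$ converges, with $a$ the asymptotic inverse of $b$, and the limiting variable for the inverse is $\sigma^{-\alpha}$. This is exactly the inversion relation underlying the Darling-Kac theorem and the Dynkin-Lamperti arcsine laws; it is recorded for instance in the functional limit theorems for L\'evy processes and in \cite{BGT}. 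Since $b$ is regularly varying with index $1/\alpha$ and $a$ with index $\alpha$, the pair $(a,b)$ is precisely the asymptotically inverse pair needed, and the exponent $1/\alpha>1$ on the $\xi$-side matches the sublinear exponent $\alpha<1$ on the $T$-side. Concretely, if $b(t)^{-1}\xi_t\Rightarrow\sigma$, then for fixed $u>0$ one has $\pr(T(ut)\le r)=\pr(A(r)\ge ut)\approx\pr(\xi_r\ge ut)$, and feeding $r=a(t)v$ through the regular variation of $b$ converts this into the distribution function of $\sigma^{-\alpha}$ evaluated appropriately; this yields $T(t)/a(t)\Rightarrow c'\,\sigma^{-\alpha}$ for a constant absorbed into the normalization.

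With the inversion in hand, the identification of the limit in (ii) is then routine bookkeeping: combining $T(t)/a(t)\Rightarrow \sigma^{-\alpha}$ with Hopf's ratio ergodic theorem in the form of Lemma~\ref{L1}, where the reference function $g(x)=1/x$ satisfies $\langle\nu,g\rangle=1$ and $\int_0^t g(\tilde U(s))\dd s=T(t)$, gives $a(t)^{-1}\int_0^t f(\tilde U(s))\dd s\Rightarrow\langle\nu,f\rangle\,\sigma^{-\alpha}$ by Slutsky's theorem. That $\sigma^{-\alpha}$ is a Mittag-Leffler variable of parameter $\alpha$ follows from the standard fact that the negative $\alpha$-power of a positive $\alpha$-stable variable has moments $\E(\sigma^{-\alpha k})=k!/\Gamma(1+\alpha k)\cdot(\text{const})$, which characterize the Mittag-Leffler law. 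The main obstacle, and the step requiring genuine care rather than citation, is the first one: justifying rigorously that $A(t)\sim\tilde\xi_t$ (equivalently $\ln\tilde I(t)\sim\tilde\xi_t$) with an error negligible after dividing by $b$, so that the weak limit of $b(t)^{-1}\xi_t$ is genuinely inherited by $b(t)^{-1}A(t)$ and hence transferred to $T$. The potential trap is that $\ln\tilde I(t)-\tilde\xi_t=\ln\int_{-\infty}^t\exp(\tilde\xi_s-\tilde\xi_t)\dd s$ is a random remainder which, although tight (it converges in law to $\ln\hat I$ by the time-reversal argument behind \eqref{e:vconv}), must be shown to be of smaller order than $b(t)$; since $b(t)\to\infty$ and the remainder is $O_{\pr}(1)$, this is plausible but needs a clean tightness-plus-Slutsky argument, and it is precisely where I would concentrate the rigorous work.
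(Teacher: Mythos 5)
Your proposal is correct and follows essentially the same route as the paper: reduction via Lemma~\ref{L1}, the decomposition $A(t)-\xi_t=\ln\tilde V(t)-\ln\tilde I(0)$ with the remainder negligible after division by $b(t)$, the regular-variation inversion giving $a(t)^{-1}T(t)\Rightarrow\sigma^{-\alpha}$, and the reverse implication by running the same inversion backwards (with the limit forced to be stable since $\xi$ is L\'evy). The one step you flag as still needing ``genuine care'' is in fact immediate: under the stationary construction $\ln\tilde V(t)$ has the fixed law of $\ln\hat I$ for every $t$, so it is $O_{\pr}(1)$ and Slutsky's theorem applies at once --- which is exactly the paper's one-sentence argument.
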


\begin{rem}  In the case when $\xi$ is a subordinator,  Caballero and Rivero proved that the assertion (i) in Theorem \ref{TDK} is equivalent to the assertion (i) of Lemma \ref{L1}
with the weak limit there given by a Mittag-Leffler distribution; see Proposition 2 in \cite{CabRiv}. Thus in that special case, Theorem \ref{TDK} follows directly from  Proposition 2 in \cite{CabRiv} and the present Lemma \ref{L1}.
\end{rem}
\begin{proof} Assume (i); it is well-known that the non-degenerate weak limit  $\sigma$ of $b(t)^{-1}\xi_t$ is an $\alpha$-stable variable, which is necessarily positive a.s. since $\xi$ drifts to $\infty$. Recall that $A(t)=\ln \tilde I(t)-\ln \tilde I(0)$ and write
$$\xi_t = \ln \tilde I(t) -  \ln \tilde V(t).$$ 
We deduce from the stationarity of $\tilde V$ and 
  Slutsky's theorem that there is the weak converge
  $$b(t)^{-1}A(t) \Longrightarrow \sigma \qquad \text{as }t\to \infty.$$
  Using the assumption that $b$ is an asymptotic inverse of $a$ and recalling that $b$ is regularly varying with index $1/\alpha$, this entails by a standard argument that
  $$a(t)^{-1}T(t) \Longrightarrow \sigma^{-\alpha} \qquad \text{as }t\to \infty,$$
  and we conclude from Lemma \ref{L1} (it is well-known that $\sigma^{-\alpha}$ is proportional to a Mittag-Leffler variable with parameter $\alpha$; see for instance Exercise 4.19 in Chaumont and Yor \cite{ChYo}).
  
  Conversely, if (ii) holds for some $f\in L^1(\nu)$ with  $\langle \nu,f\rangle\neq0$, then by Hopf's ergodic theorem and Lemma \ref{L1},   
  $$a(t)^{-1}T(t) \Longrightarrow G\qquad \text{as }t\to \infty,$$
  for some non-degenerate random variable $G$. The same argument as above yields
  $$b(t)^{-1}\xi_t \Longrightarrow G^{-1/\alpha} \qquad \text{as }t\to \infty,$$
  and $G^{-1/\alpha} $ has to be a positive $\alpha$-stable variable.
  \end{proof}
 More precisely,  the argument of the proof shows that  when (i) is satisfied,  the weak convergence in (ii) holds
  independently of the initial value $\tilde U(0)$. That is, equivalently, one may replace $\tilde U$ by $U$, the starting point $U(0)$ being arbitrary. 

\vskip 2mm {\bf Acknowledgment:} I would like to thank V{\'\i}ctor  {Rivero} for pointing at important references which I missed in the first draft of this work.
  
\bibliographystyle{plain}
\bibliography{ergOU}

\begin{thebibliography}{10}

\bibitem{Aaron}
Jon Aaronson.
\newblock {\em An introduction to infinite ergodic theory}, volume~50 of {\em
  Mathematical Surveys and Monographs}.
\newblock American Mathematical Society, Providence, RI, 1997.

\bibitem{BehLin}
Anita Behme and Alexander Lindner.
\newblock On exponential functionals of {L}\'evy processes.
\newblock {\em J. Theoret. Probab.}, 28(2):681--720, 2015.

\bibitem{BLM}
Anita Behme, Alexander Lindner, and Makoto Maejima.
\newblock On the range of exponential functionals of {L}\'evy processes.
\newblock In {\em S\'eminaire de {P}robabilit\'es {XLVIII}}, volume 2168 of
  {\em Lecture Notes in Math.}, pages 267--303. Springer, Cham, 2016.

\bibitem{BeSa}
Jean Bertoin and Mladen Savov.
\newblock Some applications of duality for {L}\'evy processes in a half-line.
\newblock {\em Bull. Lond. Math. Soc.}, 43(1):97--110, 2011.

\bibitem{BY1}
Jean Bertoin and Marc Yor.
\newblock The entrance laws of self-similar {M}arkov processes and exponential
  functionals of {L}\'evy processes.
\newblock {\em Potential Anal.}, 17(4):389--400, 2002.

\bibitem{BY2}
Jean Bertoin and Marc Yor.
\newblock Exponential functionals of {L}\'evy processes.
\newblock {\em Probab. Surv.}, 2:191--212, 2005.

\bibitem{BGT}
N.~H. Bingham, C.~M. Goldie, and J.~L. Teugels.
\newblock {\em Regular variation}, volume~27 of {\em Encyclopedia of
  Mathematics and its Applications}.
\newblock Cambridge University Press, Cambridge, 1989.

\bibitem{CabCha}
M.~E. Caballero and L.~Chaumont.
\newblock Weak convergence of positive self-similar {M}arkov processes and
  overshoots of {L}\'evy processes.
\newblock {\em Ann. Probab.}, 34(3):1012--1034, 2006.

\bibitem{CabRiv}
Mar{\'\i}a~Emilia {Caballero} and V{\'\i}ctor~Manuel {Rivero}.
\newblock {On the asymptotic behaviour of increasing self-similar Markov
  processes.}
\newblock {\em {Electron. J. Probab.}}, 14:865--894, 2009.

\bibitem{CPY}
Philippe Carmona, Fr{\'e}d{\'e}rique Petit, and Marc Yor.
\newblock On the distribution and asymptotic results for exponential
  functionals of {L}\'evy processes.
\newblock In {\em Exponential functionals and principal values related to
  Brownian motion}, Bibl. Rev. Mat. Iberoamericana, pages 73--130. Rev. Mat.
  Iberoamericana, Madrid, 1997.

\bibitem{CKPR}
Lo{\"{\i}}c Chaumont, Andreas Kyprianou, Juan~Carlos Pardo, and V{\'{\i}}ctor
  Rivero.
\newblock Fluctuation theory and exit systems for positive self-similar
  {M}arkov processes.
\newblock {\em Ann. Probab.}, 40(1):245--279, 2012.

\bibitem{ChYo}
Lo\"{\i}c {Chaumont} and Marc {Yor}.
\newblock {\em {Exercises in probability. A guided tour from measure theory to
  random processes via conditioning. 2nd ed.}}
\newblock Cambridge: Cambridge University Press, 2nd ed. edition, 2012.

\bibitem{Doney}
Ronald~A. Doney.
\newblock {\em Fluctuation theory for {L}\'evy processes}, volume 1897 of {\em
  Lecture Notes in Mathematics}.
\newblock Springer, Berlin, 2007.
\newblock Lectures from the 35th Summer School on Probability Theory held in
  Saint-Flour, July 6--23, 2005, Edited and with a foreword by Jean Picard.

\bibitem{ErMa}
K.~Bruce Erickson and Ross~A. Maller.
\newblock Generalised {O}rnstein-{U}hlenbeck processes and the convergence of
  {L}\'evy integrals.
\newblock In {\em S\'eminaire de {P}robabilit\'es {XXXVIII}}, volume 1857 of
  {\em Lecture Notes in Math.}, pages 70--94. Springer, Berlin, 2005.

\bibitem{HaRi}
B\'en\'edicte Haas and V\'{\i}ctor Rivero.
\newblock Quasi-stationary distributions and {Y}aglom limits of self-similar
  {M}arkov processes.
\newblock {\em Stochastic Process. Appl.}, 122(12):4054--4095, 2012.

\bibitem{Kevei}
P\'eter Kevei.
\newblock Ergodic properties of generalized {O}rnstein-{U}hlenbeck processes.
\newblock {\em Stochastic Processes and their Applications}, pages~--, 2017.

\bibitem{KPS}
A.~Kuznetsov, J.~C. Pardo, and M.~Savov.
\newblock Distributional properties of exponential functionals of {L}\'evy
  processes.
\newblock {\em Electron. J. Probab.}, 17:no. 8, 35, 2012.

\bibitem{La1}
John Lamperti.
\newblock Semi-stable stochastic processes.
\newblock {\em Trans. Amer. Math. Soc.}, 104:62--78, 1962.

\bibitem{La2}
John Lamperti.
\newblock Semi-stable {M}arkov processes. {I}.
\newblock {\em Z. Wahrscheinlichkeitstheorie und Verw. Gebiete}, 22:205--225,
  1972.

\bibitem{Lee}
Oesook Lee.
\newblock Exponential ergodicity and $\beta$-mixing property for generalized
  {O}rnstein-{U}hlenbeck processes.
\newblock {\em Theoretical Economics Letters}, 2(1):21--25, 2012.

\bibitem{LinMa}
Alexander Lindner and Ross Maller.
\newblock L\'evy integrals and the stationarity of generalised
  {O}rnstein-{U}hlenbeck processes.
\newblock {\em Stochastic Process. Appl.}, 115(10):1701--1722, 2005.

\bibitem{MauZw}
Krishanu Maulik and Bert Zwart.
\newblock Tail asymptotics for exponential functionals of {L}\'evy processes.
\newblock {\em Stochastic Process. Appl.}, 116(2):156--177, 2006.

\bibitem{MeynT}
S.~P. Meyn and R.~L. Tweedie.
\newblock {\em Markov chains and stochastic stability}.
\newblock Communications and Control Engineering Series. Springer-Verlag London
  Ltd., London, 1993.
\newblock Also available for download from \texttt{http://probability.ca/MT/}
  free of charge.

\bibitem{ParRiv}
Juan~Carlos Pardo and V\'{\i}ctor Rivero.
\newblock Self-similar {M}arkov processes.
\newblock {\em Bol. Soc. Mat. Mexicana (3)}, 19(2):201--235, 2013.

\bibitem{Patie}
Pierre {Patie}.
\newblock {$q$-invariant functions for some generalizations of the
  Ornstein-Uhlenbeck semigroup.}
\newblock {\em {ALEA, Lat. Am. J. Probab. Math. Stat.}}, 4:31--43, 2008.

\bibitem{Riv}
V\'{\i}ctor Rivero.
\newblock Recurrent extensions of self-similar {M}arkov processes and
  {C}ram\'er's condition.
\newblock {\em Bernoulli}, 11(3):471--509, 2005.

\bibitem{RW}
L.~C.~G. Rogers and David Williams.
\newblock {\em Diffusions, {M}arkov processes, and martingales. {V}ol. 1}.
\newblock Wiley Series in Probability and Mathematical Statistics: Probability
  and Mathematical Statistics. John Wiley \& Sons, Ltd., Chichester, second
  edition, 1994.
\newblock Foundations.

\bibitem{Sato}
Ken-iti Sato.
\newblock {\em L\'evy processes and infinitely divisible distributions},
  volume~68 of {\em Cambridge Studies in Advanced Mathematics}.
\newblock Cambridge University Press, Cambridge, 2013.
\newblock Translated from the 1990 Japanese original, Revised edition of the
  1999 English translation.

\bibitem{Toto}
Haruo Totoki.
\newblock Time changes of flows.
\newblock {\em Mem. Fac. Sci. Kyushu Univ. Ser. A}, 20:27--55, 1966.

\bibitem{Wang}
Jian Wang.
\newblock On the exponential ergodicity of {L}\'evy driven
  {O}rnstein-{U}hlenbeck processes.
\newblock {\em Journal of Applied Probability}, 49(4):990--1004, 2012.

\end{thebibliography}
\end{document}